 \newtheorem{theorem}{Theorem}[section]
\title{\textbf{Uniqueness for inverse problems of determining orders of 
multi-term time-fractional derivatives of diffusion equation}$^*$}
\author{\textbf{Zhiyuan Li$^\dag$}, \textbf{Masahiro Yamamoto$^\dag$}}
\date{}
\begin{document}
 \maketitle
 
\begin{center}
$^\dag$ Graduate School of Mathematical Sciences, the University of Tokyo, 
3-8-1 Komaba, Meguro-ku, Tokyo 153-8914, Japan.\\
E-mail: zyli@ms.u-tokyo.ac.jp, myama@ms.u-tokyo.ac.jp\\
\end{center}
 
\renewcommand{\thefootnote}{\fnsymbol{footnote}}

\footnotetext{
$^*$ Project supported by the Leading Graduate Course for Frontiers of 
Mathematical Sciences and Physics (The University of Tokyo).
}

 
\begin{abstract}
This article proves the uniqueness for two kinds of inverse problems
of identifying fractional orders in diffusion equations with multiple 
time-fractional derivatives by pointwise observation. 
By means of eigenfunction expansion and Laplace transform, 
we reduce the uniqueness for our inverse problems to the uniqueness 
of expansions of some special function and complete the proof.
\end{abstract}


\textbf{Keywords:} fractional diffusion equation,
  inverse problem, determination of fractional \\ orders,
  eigenfunction expansion,  multi-time-fractional derivatives


\section{Introduction}
Recently, the fractional diffusion equations have been successfully 
used for modelling physical processes such as anomalous diffusion 
(e.g., Metzler and Klafter \cite{MK}). For better modelling, 
a fractional diffusion equation with multi-term time-fractional 
derivatives has been proposed (e.g., Luchko \cite{Lu09},
\cite{Luchko2} and the references therein), which is given by:
\begin{equation}\label{equ-intro-u}
\sum_{j=1}^{n}q_j \partial_t^{\alpha_j} u(x,t) = -Au(x,t), \quad
x\in \Omega, \thinspace t > 0.
\end{equation}
Here $\Omega$ is a bounded domain in $\mathbb{R}^d$ with smooth boundary
$\partial\Omega$, $q_j>0$, $\alpha_j \in (0,1)$, $j=1,2,..., n$ are 
constants, and $-A$ is an elliptic operator, and we define the 
Caputo fractional derivative with respect to $t$ by
$$
  \partial_t^{\alpha} g(t) 
= \frac{1}{\Gamma(1-\alpha)} 
  \int_0^t (t-\tau)^{-\alpha}\frac{d}{d\tau} g(\tau) d\tau, \quad
0 < \alpha < 1, 
$$
where $\Gamma$ is the Gamma function (e.g., Podlubny \cite{Podlubny}).
Beckers and Yamamoto \cite{BY}, Li and Yamamoto \cite{LY} and
Li, Liu and Yamamoto \cite{LLY} discuss some properties as well as
the unique existence of solution to an initial value - boundary value 
problem for (\ref{equ-intro-u}). 

When we consider (\ref{equ-intro-u}) as model equation for describing 
e.g., anomalous diffusion in inhomogeneous media, the orders $\alpha_j$
of fractional derivatives should be determined by the inhomogeneity of 
the media, but it is not clear which physical law can correspond the 
inhomogeneity to the orders $\alpha_j$.  Thus one reasonable way for 
estimating $\alpha_j$ is an inverse problem of determining 
$\alpha_1, ..., \alpha_n$ in order to match available data such as 
$u(x_0,t)$, $0 < t < T$ at a monitoring point $x_0 \in \Omega$. 
In this article, we establish the uniqueness of our inverse problems, 
and the uniqueness is the first fundamental theoretical subject. 
As for inverse problems for fractional diffusion equations with a single 
time-fractional derivative, see for Cheng, Nakagawa, Yamamoto and Yamazaki 
\cite{CNYY}, Hatano, Nakagawa, Wang and Yamamoto \cite{HNWY}, for example.

In this article, we consider two kinds of inverse problems in determining
orders $\alpha_j$ and other quantities. In section 2, an initial value is 
assumed to concentrate on $x=0$, i.e., the Dirac function $\delta(x)$.
The uniqueness in determining orders $\alpha_j$ by measured data at one 
endpoint is proved. In section 3, choosing an initial value in $L^2(\Omega)$ 
suitably, we establish the uniqueness.


\section{Inverse problem with $\delta$ initial value}
In this section, we consider the following initial value - boundary value 
problem
\begin{equation} \label{equ-1d-u}
   \left\{
  {\begin{array}{*{20}c}
  {\sum_{j=1}^{n}q_j \partial_t^{\alpha_j} u(x,t)
   =\partial_x \big(p(x)\partial_xu(x,t)\big)+c(x)u(x,t)},\quad (x,t)\in(0,\ell)\times(0,T), 
\hfill \\
  {u(x,0)=\delta(x),\quad x\in (0,\ell),} \hfill \\
  {\partial_xu(0,t)=\partial_xu(\ell,t)=0,\quad t \in (0,T).} \hfill
 \end{array} } \right.
 \end{equation}
Here $T>0$, $\ell>0$ are fixed and $\delta(x)$ is the Dirac function.

We assume that $c\in C[0,\ell]$, $c\leq0$ on $[0,\ell]$, 
and $p\in C^2[0,\ell]$ is known as a positive function, 
and $0<\alpha_1<\cdots<\alpha_{n-1}<\alpha_n<1$, $q_j>0$, $j=1,\cdots,n$, 
are constants. The initial condition in (\ref{equ-1d-u}) means that we start 
experiments by setting up a density profile concentrating on $x=0$, 
and the boundary data of (\ref{equ-1d-u}) requires no fluxes at both endpoints.

Let $I$ be a non-empty open interval in $(0,T)$. We discuss

{\bf\textit{Inverse problem:}} 
Determine the number $n$ of fractional orders $\alpha_j$, fractional orders 
$\{\alpha_j\}_{j=1}^n$ of the time derivatives, and constant coefficients 
$\{q_j\}_{j=1}^n$ of the fractional derivatives from boundary measurement 
$u(0,t)$, $t\in I$.

First we define an operator $A_p$ in $L^2(0,\ell)$ by 
$$
\left\{
  \begin{array}{*{20}c}
  { (A_p\psi)(x)=-\frac{d}{dx}\left( p(x) \frac{d}{dx}\psi(x)\right)-c(x)\psi(x)},
 \quad 0<x<\ell, \hfill \\
  { \mathcal{D}(A_p)
=\left\{ \psi\in H^2(0,\ell); \frac{d\psi}{dx}(0)=\frac{d\psi}{dx}(\ell)=0 
\right\}. }\hfill
 \end{array}\right.
$$
Let $\{\lambda_k,\varphi_k\}_{k=1}^{\infty}$ be an eigensystem of the elliptic 
operator $A_{p}$.  That is, 
$0=\lambda_1<\lambda_2<\cdots,\ \lim_{k\rightarrow\infty}\lambda_k=\infty$,
$A\varphi_k = \lambda_k\varphi_k$, and $\{\varphi_k\}\subset H^2(0,\ell)$ 
forms an orthogonal basis of $L^2(\Omega)$ with $\varphi_k(0) = 1$.

Now we define the multinomial Mittag-Leffler function (see Luchko and Gorenflo 
\cite{LG99}):
$$
E_{(\theta_1,\cdots,\theta_n),\theta_0}(z_1,\cdots,z_n)
:=\sum_{k=0}^\infty\sum_{k_1+\cdots+k_n=k}{(k;k_1,\cdots,k_n)
\prod_{j=1}^nz_j^{k_j}}{\Gamma\left(\theta_0+\sum_{j=1}^n\theta_jk_j
\right)},
$$
where $0<\theta_0<2$, $0<\theta_j<1$, $z_j\in\mathbb{C}$ ($j=1,\cdots,n$), 
and $(k;k_1,\cdots,k_n)$ denotes the multinomial coefficient
$$
(k;k_1,\cdots,k_n):={k!}{k_1!\cdots k_n!}\quad\mbox{with }k=\sum_{j=1}^nk_j
$$
(see also \cite{LLY}). For later use, we adopt the abbreviation
$$
  E_{\bm{q},\bm{\alpha}',1+\alpha_n}^{(j)}(t)
:=E_{(\alpha_n,\alpha_n-\alpha_1,\cdots,\alpha_n-\alpha_{n-1}),1+\alpha_n}
  \left(-\frac{\lambda_j}{q_n}t^{\alpha_n},-\frac{q_1}{q_n}t^{\alpha_n-\alpha_1},
        \cdots,-\frac{q_{n-1}}{q_n} t^{\alpha_n-\alpha_{n-1}}\right),\quad t>0.
$$
Similarly to \cite{CNYY}, by means of the eigensystem $\{\lambda_k, \varphi_k\}
^{\infty}_{k=1}$, we can define the weak solution to (\ref{equ-1d-u}), but we here 
omit the details.
 
\begin{theorem}[Uniqueness] \label{thm-uniqu-delta}
Let us assume $p\in C^2[0,\ell]$, $p>0$ on $[0,\ell]$ and $c\in C[0,\ell]$, $c\leq 0$ on $[0,\ell]$. Let $u$ be the weak 
solution to $(\ref{equ-1d-u})$, and let $v$ be the weak solution to $(\ref{equ-1d-v})$ 
with the same initial and boundary conditions as $(\ref{equ-1d-u})$:
\begin{equation} \label{equ-1d-v}
   \left\{
  {\begin{array}{*{20}c}
  {\sum_{j=1}^{m}r_j \partial_t^{\beta_j} v(x,t)
   =\partial_x \big(p(x)\partial_xv(x,t)\big)+c(x)v(x,t)},\ (x,t)\in(0,\ell)\times(0,T), 
\hfill \\
  {v(x,0)=\delta(x),\ x\in (0,\ell),} \hfill \\
  {\partial_xv(0,t)=\partial_xv(\ell,t)=0,\quad t \in (0,T),} \hfill
 \end{array} } \right.
 \end{equation}
where $0 < \beta_1 < \cdots < \beta_m < 1$ and $r_j>0$, $j=1,\cdots,m$ are 
constants. Then $u(0,t)=v(0,t)$ in $I$ implies $m=n$, $\alpha_j=\beta_j$ 
and $q_j=r_j$, $j=1,\cdots,n$.
\end{theorem}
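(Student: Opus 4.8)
The plan is to take the Laplace transform of the observed trace, turn the uniqueness question into one about a single scalar function built from the (common, known) spectral data, and then peel off the unknowns one layer at a time.

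\emph{Step 1: eigenfunction expansion and Laplace transform.} Expand the weak solution as $u(x,t)=\sum_{k\ge1}u_k(t)\varphi_k(x)$. Projecting $(\ref{equ-1d-u})$ onto $\varphi_k$ and using that $A_p$ is self-adjoint with $A_p\varphi_k=\lambda_k\varphi_k$, together with the pairing $\langle\delta,\varphi_k\rangle=\varphi_k(0)=1$, the coefficient $u_k$ solves the scalar multi-term fractional ODE $\sum_{j=1}^n q_j\partial_t^{\alpha_j}u_k(t)=-\lambda_ku_k(t)$ with $u_k(0)=c_k:=\|\varphi_k\|_{L^2(0,\ell)}^{-2}>0$. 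Applying the Laplace transform together with $\widehat{\partial_t^{\alpha}g}(s)=s^{\alpha}\widehat g(s)-s^{\alpha-1}g(0)$ to each term and writing $P(s):=\sum_{j=1}^n q_js^{\alpha_j}$, one gets $\widehat{u_k}(s)=c_kP(s)\big/\bigl(s(P(s)+\lambda_k)\bigr)$. Summing over $k$ (the series, and its interchange with the Laplace transform, converge for $t>0$ by the decay of the multinomial Mittag--Leffler factors together with the one-dimensional Sturm--Liouville asymptotics $\lambda_k\asymp k^2$, $c_k\asymp1$) and using $\varphi_k(0)=1$ yields
\[
  s\,\widehat{u(0,\cdot)}(s)=g\bigl(P(s)\bigr),\qquad g(w):=\sum_{k=1}^{\infty}\frac{c_k\,w}{w+\lambda_k},
\]
in a right half-plane. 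Since $g$ depends only on $p$ and $c$ through $\{\lambda_k,\|\varphi_k\|_{L^2(0,\ell)}\}$, it is the \emph{same} function in the analogous identity $s\,\widehat{v(0,\cdot)}(s)=g\bigl(Q(s)\bigr)$ for $v$, where $Q(s):=\sum_{j=1}^m r_js^{\beta_j}$. This is the announced reduction to an expansion of a special function.

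\emph{Step 2: removing the time-interval restriction.} The traces $t\mapsto u(0,t)$ and $t\mapsto v(0,t)$ are real-analytic on $(0,\infty)$ (the standard smoothing-in-time property of (multi-term) time-fractional diffusion equations, cf.\ \cite{CNYY}, \cite{LLY}; it is also apparent from the Mittag--Leffler series). Hence $u(0,\cdot)=v(0,\cdot)$ on the open interval $I$ propagates to $u(0,\cdot)=v(0,\cdot)$ on all of $(0,\infty)$, and therefore $\widehat{u(0,\cdot)}=\widehat{v(0,\cdot)}$ in a right half-plane. Combined with Step 1 this gives $g\bigl(P(s)\bigr)=g\bigl(Q(s)\bigr)$ for $s>0$.

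\emph{Step 3: inverting $g$ and separating the parameters.} Because $\lambda_1=0$, the $k=1$ term of $g$ equals $c_1$ for $w\ne0$, so $g(w)=c_1+\sum_{k\ge2}c_kw/(w+\lambda_k)$ and $g'(w)=\sum_{k\ge2}c_k\lambda_k(w+\lambda_k)^{-2}>0$ for $w\ge0$; thus $g$ is strictly increasing, hence injective, on $(0,\infty)$. Since $P(s)>0$ and $Q(s)>0$ for $s>0$, we obtain $P(s)=Q(s)$ for all $s>0$, i.e.\ $\sum_{j=1}^n q_js^{\alpha_j}=\sum_{j=1}^m r_js^{\beta_j}$ on $(0,\infty)$. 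Finally, powers $s\mapsto s^{\gamma}$ with distinct exponents are linearly independent on $(0,\infty)$ (divide by the lowest-order power and let $s\to0^+$, then induct); matching the two strictly increasing exponent sequences forces $n=m$, and then $\alpha_j=\beta_j$ and $q_j=r_j$ for $j=1,\dots,n$, which is the assertion.

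I expect the analytic part to be the real obstacle: (i) justifying the eigenfunction expansion and the term-by-term Laplace transform in spite of the singular datum $\delta\notin L^2(0,\ell)$, which requires precise bounds on the multinomial Mittag--Leffler functions and on $\{\lambda_k,\|\varphi_k\|_{L^2(0,\ell)}\}$ in one space dimension; and (ii) the analyticity in $t>0$ of the trace $u(0,t)$, which is what permits the passage from $I$ to $(0,\infty)$ and hence to the Laplace transforms. Once these are secured, the remaining algebra — monotonicity of $g$ and linear independence of powers — is routine.
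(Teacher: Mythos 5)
Your proposal is correct and follows essentially the same route as the paper: eigenfunction expansion with the $\delta$ datum, analytic continuation of the trace from $I$ to $(0,\infty)$, Laplace transform leading to the identity $\sum_k \sigma_k P(s)/(P(s)+\lambda_k)=\sum_k \sigma_k Q(s)/(Q(s)+\lambda_k)$, and then strict monotonicity in $w$ of $w\mapsto\sum_k\sigma_k w/(w+\lambda_k)$ to force $P\equiv Q$. The only cosmetic difference is that you phrase the last step as injectivity of $g$ plus linear independence of the powers $s^{\gamma}$ (which you spell out, and the paper leaves implicit), whereas the paper runs the same monotonicity comparison as a pointwise contradiction at some $s_0>0$.
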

\begin{proof}
By an argument similar to the proof in \cite{LLY}, we can give the solutions 
to (\ref{equ-1d-u}) and (\ref{equ-1d-v}) by
\begin{align}
u(\cdot,t)
=\sum_{j=1}^{\infty} \sigma_j
   (1-\lambda_jt^{\alpha_n} E_{\bm{q},\bm{\alpha}',1+\alpha_n}^{(j)}(t))
\langle\delta,\varphi_j\rangle\varphi_j
=\sum_{j=1}^{\infty} \sigma_j
   (1-\lambda_jt^{\alpha_n} E_{\bm{q},\bm{\alpha}',1+\alpha_n}^{(j)}(t))
\varphi_j\label{sol-1d-u},\\
v(\cdot,t)
=\sum_{j=1}^{\infty} \sigma_j
   (1-\lambda_jt^{\beta_m} E_{\bm{r},\bm{\beta}',1+\beta_m}^{(j)}(t))
\langle\delta,\varphi_j\rangle\varphi_j
=\sum_{j=1}^{\infty} \sigma_j
(1-\lambda_jt^{\beta_m} E_{\bm{r},\bm{\beta}',1+\beta_m}^{(j)}(t))
\varphi_j\label{sol-1d-v},
\end{align}
where $\sigma_j=\|\varphi_j\|_{L^2(\Omega)}^{-2}$, and there exists a constant 
$c_0>0$ such that $\sigma_j=c_0+o(1)$, $j\rightarrow\infty$. Let $t_0>0$ and 
$M>0$ be arbitrarily fixed. By the Sobolev embedding theorem and Lemma 2.2 in 
\cite{LLY}, noting that $\lambda_j \sim j^2$, 
that is, $C_0j^2 \leq \lambda_j \leq C_1j^2$ (see, e.g., \cite{CH}), 
we find
\begin{align*}
&\sum_{j=1}^{\infty} \sigma_j \|(1-\lambda_jt^{\alpha_n}
E_{\bm{q},\bm{\alpha}',1+\alpha_n}^{(j)}(t))\varphi_j\|_{C[0,\ell]}
\leq C \sum_{j=1}^{\infty} 
\sigma_j \sum_{i=1}^{n-1} \frac{t^{\alpha_n-\alpha_i}}{1+\lambda_jt^{\alpha_n}}
           \|(A_p+M)^{\frac{1}{4}+\varepsilon}\varphi_j\|_{L^2(0,\ell)}\\
\leq& C\sum_{j=1}^{\infty} |\sigma_j| 
\sum_{i=1}^{n-1} \frac{t^{\alpha_n-\alpha_i}}{1+\lambda_jt^{\alpha_n}}
(\lambda_j+M)^{\frac{1}{4}+\varepsilon}<\infty,\quad t_0\leq t\leq T.
\end{align*}
Therefore we see that the series on the right-hand side of (\ref{sol-1d-u}) 
and (\ref{sol-1d-v}) are convergent uniformly in $x\in [0,\ell]$ and $t\in[t_0,T]$.
Moreover, since the solutions $u$ and $v$ can be analytically extended to
$t>0$ in view of the analyticity of the multinomial Mittag-Leffler function
(see also \cite{LIY}, \cite{LY}), we see that $u(0,t)=v(0,t)$, $t\in I$, 
implies
\begin{align}\label{u(0,t)=v(0,t)}
\sum_{j=1}^{\infty} \sigma_j
\left(1-\lambda_jt^{\alpha_n} E_{\bm{q},\bm{\alpha}',1+\alpha_n}^{(j)}(t)
\right)
=\sum_{j=1}^{\infty} \sigma_j
\left(1-\lambda_jt^{\beta_m} E_{\bm{r},\bm{\beta}',1+\beta_m}^{(j)}(t)
\right),\quad t>0.
\end{align}
Taking the Laplace transforms on both sides of (\ref{u(0,t)=v(0,t)}), 
we find
\begin{align}\label{Lu=Lv}
\sum_{k=2}^{\infty} \sigma_k \frac{\sum_{j=1}^{n} q_j s^{\alpha_j}}
{\sum_{j=1}^{n} q_j s^{\alpha_j}+\lambda_k}
=\sum_{k=2}^{\infty} \sigma_k \frac{\sum_{j=1}^{m} r_j s^{\beta_j}}
{\sum_{j=1}^{m} r_j s^{\beta_j}+\lambda_k},
\quad \mbox{$s\in\mathbb{C}$ with $|s|$ small enough}.
\end{align}
We see that the series on both sides of (\ref{Lu=Lv}) are convergent uniformly
in $x\in[0,\ell]$ and $s \neq0, \in \mathbb{C}$, $|s|$ small enough. 

Now we prove that $m=n$, and for any $j\in \{1,\cdots,n\}$, $\alpha_j = \beta_j$ 
and $q_j = r_j$.  The proof is done by contradiction.
We assume that $m\neq n$ or else $m=n$ but there exists $j\in \{1,\cdots,n\}$
such that $\alpha_j \neq \beta_j$ or $q_j \neq r_j$.
We set $w_1(s):=\sum_{j=1}^{n} q_j s^{\alpha_j}$ and 
$w_2(s):=\sum_{j=1}^{m} r_j s^{\beta_j}$.
Then we see that there exists a small real number $s_0>0$ such that 
$w_1(s_0)\neq w_2(s_0)$.  
By $q_j, r_j > 0$, we have $w_1(s_0), w_2(s_0) > 0$.  
Therefore $w_1(s_0)+\lambda_k > 0$ and $w_2(s_0)+\lambda_k > 0$ for all 
$k=2,3,\cdots$ in view of $\lambda_k>0$ for $k=2,3,\cdots$.

Since $w_1(s_0)\ne w_2(s_0)$, we can assume that $w_1(s_0)> w_2(s_0)$. 
Then 
$$
\frac{w_1(s_0)}{w_1(s_0)+\lambda_k}>\frac{w_2(s_0)}{w_2(s_0)+\lambda_k},\ 
k=2,3,\cdots
$$
by $w_1(s_0)+\lambda_k > 0$ and $w_2(s_0)+\lambda_k > 0$ for all $k=2,3,\cdots$.
By the above inequality and $\sigma_k>0$, we deduce that
$$
\sum_{k=2}^{\infty}  \sigma_k \frac{\sum_{j=1}^{n} q_j s_0^{\alpha_j}}
{\sum_{j=1}^{n} q_j s_0^{\alpha_j}+\lambda_k}
>\sum_{k=2}^{\infty} \sigma_k \frac{\sum_{j=1}^{m} r_j s_0^{\beta_j}}
{\sum_{j=1}^{m} r_j s_0^{\beta_j}+\lambda_k},
$$
which is a contradiction.  Hence we have $m=n$, 
and for any $j\in \{1,\cdots,n\}$, $\alpha_j = \beta_j$ and $q_j = r_j$.
Thus the proof of the theorem is completed.
\end{proof}


\section{Inverse problem  with $L^2(\Omega)$ initial value}
In this section, we consider a bounded domain $\Omega \subset \mathbb{R}^d$ 
with smooth boundary $\partial\Omega$.  
Let $T>0$ be fixed arbitrarily. 
Consider the following initial value - boundary value problem
\begin{equation} \label{equ-d-u}
   \left\{
  \begin{array}{*{20}c}
  \sum_{i=1}^{n}q_j\partial_t^{\alpha_i} u(x,t)
   =\sum_{i,j=1}^d {\partial_j(a_{ij}\partial_iu(x,t))} 
+ c(x)u(x,t),\ (x,t)\in\Omega\times(0,T), \hfill \\
  {u(x,0)=a(x),\ x\in \Omega,} \hfill \\
  {u(x,t)=0,\ (x,t) \in\partial\Omega\times (0,T),} \hfill
 \end{array} \right.
 \end{equation}
where $\alpha_j$ and $q_j>0$, $j=1,\cdots,n$, are constants such that 
\begin{align}\label{condi-alpha}
0<\alpha_1<\cdots<\alpha_n<1,
\end{align}
$a_{ij}=a_{ji}$, $1\leq i,j\leq d$, and $c\leq0$ in $\overline{\Omega}$.
Moreover, it is assumed that $a_{ij}\in C^1(\overline{\Omega})$ and
$c\in C(\overline{\Omega})$, and there exists a constant $\mu>0$ such that
$$
\mu\sum_{i=1}^d \xi_i^2\leq \sum_{i,j=1}^d a_{ij}(x)\xi_i\xi_j,\quad 
\forall x\in\overline{\Omega},
\forall (\xi_1,\cdots,\xi_d)\in\mathbb{R}^d.
$$
Now we define operator $A$ in $H^2(\Omega)\cap H_0^1(\Omega)$ as follows:
\begin{equation*}
 (A\psi)(x)=-\sum_{i,j=1}^d {\partial_j(a_{ij}(x)\partial_i\psi(x))}
 - c(x)\psi(x),\quad x\in\Omega,\quad
 \psi\in H^2(\Omega)\cap H_0^1(\Omega). 
 \end{equation*}
Let $\{\lambda_k, \phi_k\}_{k=1}^{\infty}$ be an eigensystem of 
the elliptic operator $A$:
$0<\lambda_1<\lambda_2<\cdots,\ \lim_{k\rightarrow\infty}\lambda_k=\infty$,
and $A\phi_k=\lambda_k\phi_k$, $\{\phi_k\}_{k=1}^{\infty}\subset 
H^2(\Omega)\cap H_0^1(\Omega)$ forms an orthogonal basis of $L^2(\Omega)$.

Henceforth $(\cdot, \cdot)$ denotes the scalar product in $L^2(\Omega)$.
Moreover we can define a fractional power $A^{\gamma}$ of $A$ with 
$\gamma > 0$ (e.g., Tanabe \cite{Ta}).

We discuss

{\bf\textit{Inverse problem:}} 
Let $x_0 \in \Omega$ be fixed and let $I \subset (0,T)$ be a non-empty 
open interval.
Determine the number $n$ of fractional orders $\alpha_j$, fractional orders 
$\{\alpha_j\}_{j=1}^n$ of the time derivatives, and constant coefficients 
$\{q_j\}_{j=1}^n$ of the fractional derivatives from interior measurement 
$u(x_0,t)$, $t\in I$.

\begin{theorem}[Uniqueness]\label{thm-uniqu-L2}
Assuming that $a\geq0$ in $\Omega$, $a\neq0$ and $a\in D(A^{\gamma})$ with 
$\gamma>\max\{\frac{d}{2}+\delta-1,0\}$, $\delta>0$ can be sufficiently small. 
Let $u$ be the weak solution to $(\ref{equ-d-u})$, and let $v$ be the weak 
solution to $(\ref{equ-d-v})$ with the same initial and boundary conditions 
as $(\ref{equ-d-u})$:
\begin{equation} \label{equ-d-v}
   \left\{
  {\begin{array}{*{20}c}
  {\sum_{j=1}^m r_j \partial_t^{\beta_j} v(x,t)
   =\sum_{i,j=1}^d {\partial_j(a_{ij}\partial_iv(x,t))} + c(x)v(x,t)},
\ (x,t)\in\Omega\times(0,T), \hfill \\
  {v(x,0)=a(x),\ x\in \Omega,} \hfill \\
  {v(x,t)=0,\quad (x,t) \in \partial\Omega\times(0,T),} \hfill
 \end{array} } \right.
 \end{equation}
where $r_i>0$, $i=1,\cdots,m$ are constants, and
\begin{align}\label{condi-beta}
0<\beta_1<\cdots<\beta_m<1.
\end{align}
Then for any fixed $x_0\in\Omega$, $u(x_0,t)=v(x_0,t)$, $t\in I$, 
implies $m=n$, $\alpha_i=\beta_i$, $q_i=r_i$, $i=1,\cdots,n$.
\end{theorem}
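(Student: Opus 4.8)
The plan is to mirror the argument used for Theorem \ref{thm-uniqu-delta}, adapting it from the one-dimensional Neumann problem with $\delta$ initial data to the $d$-dimensional Dirichlet problem with $L^2$ initial data $a$. First I would write out the eigenfunction expansions of $u$ and $v$: as in the proof of Theorem \ref{thm-uniqu-delta} (following \cite{LLY}), we have
\begin{align*}
u(\cdot,t) = \sum_{k=1}^\infty (a,\phi_k)\bigl(1-\lambda_k t^{\alpha_n} E_{\bm{q},\bm{\alpha}',1+\alpha_n}^{(k)}(t)\bigr)\phi_k,
\end{align*}
and similarly for $v$ with $r_j$, $\beta_j$, $\beta_m$. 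The condition $a\in D(A^\gamma)$ with $\gamma>\max\{\tfrac{d}{2}+\delta-1,0\}$ is exactly what is needed so that, via the Sobolev embedding $H^{2\gamma+2}(\Omega)\hookrightarrow C(\overline\Omega)$ (for appropriate $\gamma$) together with the decay estimate of Lemma 2.2 in \cite{LLY} and the Weyl asymptotics $\lambda_k\sim k^{2/d}$, the series converges uniformly in $x\in\overline\Omega$ and $t\in[t_0,T]$ for any $t_0>0$; one also uses $|(a,\phi_k)|\le \lambda_k^{-\gamma}\|A^\gamma a\|_{L^2}$ to control the Fourier coefficients. Then, as before, analyticity in $t$ of the multinomial Mittag-Leffler function extends the identity $u(x_0,t)=v(x_0,t)$ from $t\in I$ to all $t>0$.

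Next I would take the Laplace transform. Using the Laplace transform formula for $\lambda_k t^{\alpha_n} E_{\bm{q},\bm{\alpha}',1+\alpha_n}^{(k)}(t)$ (again from \cite{LLY}), the identity becomes
\begin{align*}
\sum_{k=1}^\infty (a,\phi_k)\phi_k(x_0)\,\frac{\sum_{j=1}^n q_j s^{\alpha_j}}{\sum_{j=1}^n q_j s^{\alpha_j}+\lambda_k}
= \sum_{k=1}^\infty (a,\phi_k)\phi_k(x_0)\,\frac{\sum_{j=1}^m r_j s^{\beta_j}}{\sum_{j=1}^m r_j s^{\beta_j}+\lambda_k}
\end{align*}
for $s>0$ small. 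Here a genuine difference from Section 2 appears: the coefficients $c_k := (a,\phi_k)\phi_k(x_0)$ need not all be positive, and indeed need not even be nonzero, so the clean monotonicity contradiction used in Theorem \ref{thm-uniqu-delta} does not apply directly. I expect this to be the main obstacle. To get around it, set $w_1(s)=\sum_{j=1}^n q_j s^{\alpha_j}$ and $w_2(s)=\sum_{j=1}^m r_j s^{\beta_j}$, and rewrite the difference of the two sides as
\begin{align*}
\sum_{k=1}^\infty c_k\left(\frac{w_1(s)}{w_1(s)+\lambda_k}-\frac{w_2(s)}{w_2(s)+\lambda_k}\right)
= \bigl(w_1(s)-w_2(s)\bigr)\sum_{k=1}^\infty \frac{c_k\,\lambda_k}{(w_1(s)+\lambda_k)(w_2(s)+\lambda_k)} = 0.
\end{align*}
If $w_1\not\equiv w_2$, then since $w_1,w_2$ are analytic in $s$ on a right half-line, $w_1(s)\ne w_2(s)$ for all small $s>0$ outside a discrete set, so on such $s$ we need $\sum_k c_k\lambda_k/((w_1(s)+\lambda_k)(w_2(s)+\lambda_k))=0$.

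To finish, I would exploit the behaviour as $s\to 0^+$: then $w_1(s),w_2(s)\to 0$, so the $k$-th term tends to $c_k/\lambda_k$, and dominated convergence (justified by the uniform convergence already established) gives $\sum_k c_k/\lambda_k = 0$. More useful is to extract the leading term: multiply by a suitable power of $s$ or examine $\lim_{s\to0^+}$ of $\bigl(\text{expression}\bigr)/w_i(s)$-type quantities; alternatively, since the full identity before subtraction must hold identically in $s$, I would argue that the functions $s\mapsto \sum_j q_j s^{\alpha_j}$ and its analogue, viewed as Dirichlet-type series in the distinct real exponents $\{\alpha_j\}$ and $\{\beta_j\}$, determine those exponents and coefficients. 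Concretely, because $a\ge 0$, $a\ne 0$, and by the strong maximum principle / positivity of the first eigenfunction one has $(a,\phi_1)>0$ and $\phi_1(x_0)>0$, so at least the $k=1$ term is nonzero; combined with the asymptotics of the tail, a careful comparison of the $s\to 0^+$ asymptotic expansions of both sides — matching the successive powers $s^{\alpha_1},s^{\alpha_2},\dots$ against $s^{\beta_1},s^{\beta_2},\dots$ — forces $n=m$, $\alpha_i=\beta_i$, $q_i=r_i$. The technical heart, and the step I would spend the most care on, is making this asymptotic matching rigorous: showing that the countable sum over $k$ does not conspire to cancel the leading fractional-power asymptotics coming from $w_1(s)-w_2(s)$, which is where the positivity of $(a,\phi_1)\phi_1(x_0)$ and the sign condition $c\le 0$ (guaranteeing $\lambda_k>0$, hence all denominators positive and bounded below) are essential.
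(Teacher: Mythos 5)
Your setup — eigenfunction expansion, uniform convergence from $a\in D(A^\gamma)$ with $\gamma>\frac d2-1$, analytic continuation in $t$, Laplace transform, and the observation that the coefficients $c_k=(a,\phi_k)\phi_k(x_0)$ are no longer sign-definite — matches the paper up to equation (\ref{series Lu=Lv}). But the proof is not closed: the step you yourself call the ``technical heart,'' namely showing that the infinite sum over $k$ cannot conspire to cancel the leading fractional-power asymptotics, is exactly the crux, and the positivity you invoke there, $(a,\phi_1)>0$ and $\phi_1(x_0)>0$, is not enough — a single positive term cannot prevent cancellation by the remaining $c_k$ of arbitrary sign. So as written the argument establishes the identity in $s$ but does not derive $m=n$, $\alpha_i=\beta_i$, $q_i=r_i$ from it.

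The missing idea in the paper is to get sign-definite quantities not from $\phi_1$ but from the resolvent-type sums themselves: expanding each term of (\ref{series Lu=Lv}) in a geometric series in $w_i(\eta)/\lambda_j$ turns the identity into $\sum_{k\ge1}p_k\,w_1(\eta)^k=\sum_{k\ge1}p_k\,w_2(\eta)^k$ with $p_k=(-1)^k\sum_j\rho_j\lambda_j^{-k}$, i.e. $(-1)^kp_k=(A^{-k}a)(x_0)$, and the strong maximum principle (using $a\ge0$, $a\ne0$, $c\le0$, Dirichlet data) gives $(-1)^kp_k>0$, in particular $p_1=-(A^{-1}a)(x_0)<0$. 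With $p_1\neq0$ the induction on the fractional exponents goes through, because whenever $\alpha_{j+1}\ne\beta_{j+1}$ the coefficients of $\eta^{\beta_{j+1}}$ on the two sides differ by the nonzero term $p_1r_{j+1}$. It is worth noting that your own factorization route was one observation away from an alternative (and arguably shorter) proof: the limit you computed, $\sum_k c_k/\lambda_k=0$, is precisely $(A^{-1}a)(x_0)=0$, which already contradicts the strong maximum principle; that would force $w_1\equiv w_2$ on an interval, and linear independence of distinct powers $s^\mu$ then yields $m=n$, $\alpha_i=\beta_i$, $q_i=r_i$. As submitted, however, that connection is not made and the conclusion is not reached.
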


\begin{proof}
We know that
\begin{align}
u(\cdot,t)
&=\sum_{j=1}^{\infty}
(1-\lambda_jt^{\alpha_n}E_{\bm{q},\bm{\alpha}',1+\alpha_n}^{(j)}(t))(a,\phi_j)
\phi_j, \label{sol-d-u}\\
v(\cdot,t)
&=\sum_{j=1}^{\infty}
(1-\lambda_jt^{\beta_m} E_{\bm{r},\bm{\beta}' ,1+\beta_m}^{(j)}(t)) (a,\phi_j)
\phi_j \quad \mbox{in $L^2(\Omega)$} \label{sol-d-v} 
\end{align}
for each $t \in [0,T]$ (e.g., Theorem 2.4 in \cite{LLY}). The Sobolev embedding 
inequality yields that 
$
     \|\phi_j\|_{C(\overline{\Omega})}
\le C\|A^{\frac{d}{4}+\varepsilon}\phi_j\|_{L^2(\Omega)}
$ 
with sufficiently small $\varepsilon>0$, and we have 
$C_0j^{\frac{2}{d}} \leq \lambda_j \leq C_1j^{\frac{2}{d}}$ (see, e.g., \cite{CH}). 
Therefore, fixing $t_0 > 0$ arbitrarily, similarly to the proof of 
Theorem \ref{thm-uniqu-delta}, for $t \in [t_0, T]$, we obtain
\begin{eqnarray*}
&&\sum_{j=1}^{\infty}
  |(1-\lambda_jt^{\alpha_n}E_{\bm{q},\bm{\alpha}',1+\alpha_n}^{(j)}(t))|
\|(a,\phi_j)\phi_j\|_{C(\overline{\Omega})}
\leq C\sum_{j=1}^{\infty} 
      \sum_{i=1}^{n-1} \frac{t^{\alpha_n-\alpha_i}} {1+\lambda_jt^{\alpha_n}}
      \|(A^{\gamma}a,\phi_j)A^{-\gamma}\phi_j\|_{C(\overline{\Omega})}\\
\leq&& C \sum_{j=1}^{\infty} |(A^{\gamma}a,\phi_j)|
\sum_{i=1}^{n-1}\frac{t^{\alpha_n-\alpha_i}}{1+\lambda_jt^{\alpha_n}}
\lambda_j^{\frac{d}{4}+\varepsilon-\gamma}
\le C\sum_{j=1}^{\infty} \vert (A^{\gamma}a,\phi_j)\vert
\lambda_j^{\frac{d}{4}+\varepsilon-\gamma-1}\\
\le&& C\left(\sum_{j=1}^{\infty} \vert (A^{\gamma}a,\phi_j)\vert^2
\right)^{\frac{1}{2}}\left( \sum_{j=1}^{\infty} 
\lambda_j^{\frac{d}{2}+2\varepsilon-2\gamma-2}\right)^{\frac{1}{2}}.
\end{eqnarray*}
By $\lambda_j \sim j^{\frac{2}{d}}$ as $j \to\infty$ (e.g., \cite{CH}) 
and $\gamma > \frac{d}{2}-1$, we see that
$
\sum_{j=1}^{\infty}\lambda_j^{\frac{d}{2}+2\varepsilon-2\gamma-2}<\infty
$.  
Hence
\begin{equation}\label{unif-u}
\sum_{j=1}^{\infty}|(1-\lambda_jt^{\alpha_n}
E_{\bm{q}, \bm{\alpha}',1+\alpha_n}^{(j)}(t))|
  \|(a,\phi_j)\phi_j\|_{C(\overline{\Omega})}
< \infty, \quad t_0 \le t \le T.
\end{equation}
Therefore, we see that the series on the right-hand side of (\ref{sol-d-u})
and (\ref{sol-d-v}) are convergent uniformly in $x\in\overline{\Omega}$ 
and $t\in[t_0,T]$.  Moreover, since the solutions $u$ and $v$ can be
analytically extended to $t>0$ in view of the analyticity of the 
multinomial Mittag-Leffler function (e.g., \cite{LY}), 
we have $u(x_0,t)=v(x_0,t)$ for $t>0$. Consequently by the Laplace transform,
we obtain
$$
\sum_{j=1}^{\infty} \rho_j
  \frac{\sum_{i=1}^n q_i \eta^{\alpha_i-1}}
    {\sum_{i=1}^n q_i \eta^{\alpha_i}+\lambda_j}
= \sum_{j=1}^{\infty} \rho_j
  \frac{\sum_{i=1}^m r_i \eta^{\beta_i-1}}
    {\sum_{i=1}^m r_i \eta^{\beta_i}+\lambda_j},\quad \eta>0,
$$
where $\rho_j=(a,\phi_j)\phi_j(x_0)$. 
Moreover, noting $\gamma>\frac{d}{2}-1$, similarly to (\ref{unif-u}), 
we have $\sum_{j=1}^{\infty} |\rho_j|<\infty$. 
Therefore
\begin{align}\label{series Lu=Lv}
\sum_{j=1}^{\infty}\frac{\lambda_j\rho_j}{\sum_{i=1}^n q_i \eta^{\alpha_i}
+\lambda_j}
=\sum_{j=1}^{\infty}\frac{\lambda_j\rho_j}{\sum_{i=1}^m r_i \eta^{\beta_i}
+\lambda_j},
\quad \mbox{$\eta\in\mathbb{R}$ with $|\eta|$ small enough},
\end{align}
where the series on both sides are uniformly convergent for $|\eta|$ small
enough. On the other hand, we set
$$
p_k=(-1)^k\sum_{j=1}^{\infty}\frac{\rho_j}{\lambda_j^k}.
$$
Then 
$$
0 < (-1)^kp_k <\infty, \quad k\in\mathbb{N}.
$$
In fact, since $\sum_{j=1}^{\infty}|\rho_j|<\infty$, and $\lambda_j>0$,
$\lim{\lambda_j}=\infty$, 
we see that $p_k<\infty$. By the assumption of $a$, we have 
$p_1=-\sum_{j=1}^{\infty}\lambda_j^{-1}(a,\phi_j)\phi_j(x_0)=-(A^{-1}a)(x_0)$. 
Setting $b=-A^{-1}a$, we have $Ab=-a$ and $b|_{\partial\Omega}=0$. 
By the strong maximum principle for 
$Au=-\sum_{i,j=1}^d \partial_j(a_{ij}\partial_iu)-cu$ with $c\leq0$ and $a\geq0$, 
we have $b<0$ in $\Omega$. Hence $p_1<0$. 
Similarly, we can prove $(-1)^kp_k>0$ for $k=2,3,\cdots$.

We consider the asymptotic expansion of (\ref{series Lu=Lv}) near $\eta=0$. 
Since $\lambda_j>0$ for $j\in\mathbb{N}$, 
we have
$\left| \frac{\sum_{i=1}^n q_i \eta^{\alpha_i}}{\lambda_j} \right|<1$, 
$\left| \frac{\sum_{i=1}^m r_i \eta^{\beta_i}}{\lambda_j} \right|<1$ for 
small $\eta$ and all $j\in\mathbb{N}$. Consequently
\begin{align}\label{expan-alpha-beta}
\sum_{k=1}^{\infty} p_k \left(\sum_{i=1}^n q_i \eta^{\alpha_i}\right)^k
=\sum_{k=1}^{\infty} p_k \left(\sum_{i=1}^m r_i \eta^{\beta_i}\right)^k,
\mbox{uniformly converges for small $|\eta|$.}
\end{align}

Firstly, we prove $m=n$. Otherwise, we can assume $m>n$. 
Now we proceed by induction to prove that 
$\alpha_i=\beta_i$, $q_i=r_i$, $i=1,\cdots,n$. 
First we prove $\alpha_1=\beta_1$, $q_1=r_1$.
From (\ref{expan-alpha-beta}), we see that
\begin{align*}
 p_1 q_1\eta^{\alpha_1}+p_1\sum_{i=2}^n q_i \eta^{\alpha_i}
+ \sum_{k=2}^{\infty} p_k \left(\sum_{i=1}^n q_i \eta^{\alpha_i}\right)^k
=p_1 r_1\eta^{\beta_1} +p_1\sum_{i=2}^m r_i \eta^{\beta_i}
+\sum_{k=2}^{\infty} p_k \left(\sum_{i=1}^m r_i \eta^{\beta_i}\right)^k.
\end{align*} 
We see that $\alpha_1=\beta_1$ from $p_1<0$, $q_1>0$ and $r_1>0$. 
If not, we can assume that $\alpha_1>\beta_1$.  
Dividing both sides of the above equality by $\eta^{\beta_1}$, 
we obtain
\begin{align}\label{expan-alpha-beta'}
&p_1 q_1\eta^{\alpha_1-\beta_1}+p_1\sum_{i=2}^n q_i \eta^{\alpha_i-\beta_1}
+ \sum_{k=2}^{\infty} p_k \left(\sum_{i=1}^n q_i \eta^{\alpha_i}\right)^k 
\eta^{-\beta_1}                    \nonumber\\
=&p_1 r_1 +p_1\sum_{i=2}^m r_i \eta^{\beta_i-\beta_1}
+\sum_{k=2}^{\infty} p_k \left(\sum_{i=1}^m r_i \eta^{\beta_i}\right)^k 
\eta^{-\beta_1}.
\end{align} 
Now letting $\eta\rightarrow0$, from $\alpha_1>\beta_1$, (\ref{condi-alpha}) 
and (\ref{condi-beta}), we derive that the left-hand side of 
(\ref{expan-alpha-beta'}) tends to $0$, but the right-hand side tends to
$p_1 r_1\neq0$, which is a contradiction.  Hence $\alpha_1\leq\beta_1$. 
By a similar argument, we have $\alpha_1\geq\beta_1$. 
Therefore $\alpha_1=\beta_1$ and $q_1=r_1$.

Suppose for $j\in\mathbb{N}$, $1\leq j\leq n-1$ that $\alpha_i=\beta_i$,
$q_i=r_i$, for $i=1,\cdots,j$, that is
\begin{align}\label{equ-u-v}
\sum_{k=1}^{\infty}  p_k \left(\sum_{i=1}^j q_i \eta^{\alpha_i} 
+ \sum_{i=j+1}^n q_i\eta^{\alpha_i}\right)^k
=\sum_{j=1}^{\infty} p_k \left(\sum_{i=1}^j q_i \eta^{\alpha_i} 
+ \sum_{i=j+1}^m r_i\eta^{\beta_i}\right)^k,
\end{align}
uniformly converges for small $|\eta|$. 
We show that (\ref{equ-u-v}) holds also for $j+1$.

By $S_1$ and $S_2$ we denote the sets of the orders $\ell$ of the 
terms of $\eta^{\ell}$ of each side of (\ref{equ-u-v}) respectively.
For the case 
\begin{align}\label{condi-alpha>beta-notin}
\alpha_{j+1} > \beta_{j+1} \quad \mbox{and} \quad
\beta_{j+1}\notin\left\{\sum_{i=1}^jk_i\alpha_i;\quad k_i\in\mathbb{N}\right\},
\end{align}
from (\ref{condi-alpha}) and (\ref{condi-beta}), it follows that
$$
\beta_{j+1}\notin \left\{\sum_{i=1}^jk_i {\alpha_i} + \sum_{i=j+1}^n k_i 
\alpha_i;\ \ k_i\in\mathbb{N}\right\}.
$$
In fact, if not, then there exist $k_i^0\in\mathbb{N}$ for $i=1,\cdots,n$ 
such that 
$$
\beta_{j+1} = \sum_{i=1}^j k_i^0 {\alpha_i} 
+ \sum_{i=j+1}^n k_i^0 \alpha_i.
$$
Then (\ref{condi-alpha}), (\ref{condi-beta}) and (\ref{condi-alpha>beta-notin}) 
show that $\beta_{j+1}<\alpha_{j+1}<\cdots<\alpha_n$. 
Hence $k_i^0=0$ for $i=j+1,\cdots,n$. 
This means 
$
{\beta}_{j+1}\in\left\{\sum_{i=1}^jk_i\alpha_i;\quad k_i\in\mathbb{N}\right\}
$, 
which is a contradiction. Moreover we can find $\beta_{j+1}\notin S_1$ in view of
$$
S_1\subset\left\{\sum_{i=1}^j k_i {\alpha_i} + \sum_{i=j+1}^n k_i\alpha_i;
 \quad k_i\in\mathbb{N}\right\},
$$
which is a contraction since (\ref{condi-alpha>beta-notin}) yields that 
$\beta_{j+1}\in S_2$. Indeed the coefficient of $\eta^{\beta_{j+1}}$ on the 
right-hand side of (\ref{equ-u-v}) is $p_1r_{j+1}\neq0$.

For the case
\begin{align}\label{condi-alpha>beta-in}
\alpha_{j+1} > \beta_{j+1} \quad \mbox{and}
\quad \beta_{j+1}\in\left\{\sum_{i=1}^j k_i\alpha_i;\quad k_i\in\mathbb{N}\right\},
\end{align}
we now proceed to show that the coefficients of $\eta^{\beta_{j+1}}$ 
on both sides of (\ref{equ-u-v}) are different. Indeed, again using 
assumptions (\ref{condi-alpha}) and (\ref{condi-beta}), 
we find that the coefficient of $\eta^{\beta_{j+1}}$ on the left-hand side of 
(\ref{equ-u-v}) is composed only of the coefficients of $\eta^{\alpha_i}$, 
$i=1,\cdots,j$, that is
$$
\sum_{k_1\alpha_1\cdots+k_j\alpha_j
= \beta_{j+1}} p_{k_1+\cdots+k_j} q_1^{k_1} \cdots q_j^{k_j}.
$$ 
Similarly, we see that the coefficient of $\eta^{\beta_{j+1}}$ on the 
right-hand side of (\ref{equ-u-v}) is
$$
p_1r_{j+1}
+\sum_{k_1\alpha_1\cdots+k_j\alpha_j
= \beta_{j+1}} p_{k_1+\cdots+k_j} 
q_1^{k_1} \cdots q_j^{k_j}.
$$
This is a contradiction since $p_1<0$ and $r_{j+1}>0$. Consequently, 
$\alpha_{j+1}\leq\beta_{j+1}$. 
In the same manner, we can see $\alpha_{j+1}\geq\beta_{j+1}$. Therefore
$$
\sum_{k=1}^{\infty}   p_k \left(\sum_{i=1}^{j+1} q_i \eta^{\alpha_i} 
+ \sum_{i=j+2}^n q_i\eta^{\alpha_i}\right)^k
= \sum_{k=1}^{\infty} p_k \left(\sum_{i=1}^{j+1} q_i \eta^{\alpha_i} 
+ \sum_{i=j+2}^m r_i\eta^{\beta_i}\right)^k.
$$
By induction, we can derive $\alpha_i=\beta_i$ and $q_i=r_i$ for $i=1,\cdots,n$, that is
\begin{equation}\label{expan-u-v-n}
\sum_{k=1}^{\infty}   p_k \left(\sum_{i=1}^n q_i \eta^{\alpha_i}\right)^k
= \sum_{k=1}^{\infty} p_k \left(\sum_{i=1}^n q_i \eta^{\alpha_i} 
+ \sum_{i=n+1}^m r_i\eta^{\beta_i}\right)^k.
\end{equation}
Consequently 
$$
\beta_{n+1}\in\left\{ \sum_{i=1}^n k_i\alpha_i;\quad k_i\in\mathbb{N}\right\}.
$$
This is impossible. In fact, we find that the coefficient of $\eta^{\beta_{n+1}}$ 
on the left-hand side of (\ref{expan-u-v-n}) is 
$$
\sum_{k_1\alpha_1\cdots+k_n\alpha_n
= \beta_{n+1}} p_{k_1+\cdots+k_n} q_1^{k_1} \cdots q_n^{k_n}.
$$ 
and the coefficient of $\eta^{\beta_{n+1}}$ on the right-hand side of 
(\ref{expan-u-v-n}) is
$$
p_1r_{n+1}
+\sum_{k_1\alpha_1\cdots+k_n\alpha_n
= \beta_{n+1}} p_{k_1+\cdots+k_n} 
q_1^{k_1} \cdots q_n^{k_n},
$$
which is a contradiction in view of $r_{n+1}>0$. Therefore, we see that $m > n$ is impossible.
Hence $m\leq n$. Similarly, we can prove that $m\geq n$. Finally we obtain $m=n$ and 
repeat the above argument to obtain $\alpha_i=\beta_i$, $q_i=r_i$, $i=1,\cdots,n$.
\end{proof}


\section{Conclusions and remarks}
 
Theorem \ref{thm-uniqu-delta} establishes the uniqueness in simultaneously 
identifying a number of fractional derivatives as well as fractional orders 
in one-dimensional fractional diffusion equation with initial value given by 
the Dirac delta function by measured data at one endpoint. 
Theorem \ref{thm-uniqu-L2} proves the uniqueness in determining 
fractional orders in the $d$--dimensional diffusion equation with 
$L^2(\Omega)$-initial function by using interior measurement.
A more important inverse problem is simultaneous determination of diffusion 
coefficients as well as fractional orders and in a forthcoming paper
Li, Imanuvilov and Yamamoto \cite{LIY} we discuss this type of inverse 
problem.



\begin{thebibliography}{00}

\bibitem{BY} 
S. Beckers and M. Yamamoto, Regularity and unique existence of solution 
to linear diffusion equation with multiple time-fractional derivatives, 
in: K. Bredies, C. Clason, K. Kunisch, G. von Winckel (Eds.), Control and 
Optimization with PDE Constraints, Birkh\"auser, Basel, 2013, pp. 45--56.
   
\bibitem{CNYY}
M. Cheng, J. Nakagawa, M. Yamamoto and T. Yamazaki, Uniqueness in an 
inverse problem for a one
dimensional fractional diffusion equation. Inverse Problems 25 (2009) 115002.

\bibitem{CH}
R. Courant and D. Hilbert, {\em Methods of Mathematical Physics}, 
Interscience Publishers, New York, 1953.

\bibitem{HNWY}
Y. Hatano, J. Nakagawa, S. Wang and M. Yamamoto, 
Determination of order in fractional diffusion equation. J. Math-for-Ind. 5A 
(2013), 51-57.

\bibitem{LIY}
Z. Li, O. Imanuvilov and M. Yamamoto,
Uniqueness in inverse boundary value problems for fractional diffusion 
equations, preprint.
   
\bibitem{LLY}
Z. Li, Y. Liu and M. Yamamoto, Initial-boundary value problems for multi-term 
time-fractional diffusion equations with positive constant coefficients,
arXiv: 1312.2112, 2013.

\bibitem{LY}
Z. Li and M. Yamamoto, Initial-boundary value problems for linear 
diffusion equation with multiple time-fractional derivatives, 
arXiv:1306.2778v2, 2013.
  
\bibitem{Lu09}
Y. Luchko, Boundary value problems for the generalized time-fractional 
diffusion equation of distributed order, Fract. Calc. Appl. Anal. 12 (2009) 
409-422. 

\bibitem{Luchko2}
Y. Luchko, Initial-boundary-value problems for the generalized multi-term 
time-fractional diffusion equation. J. Math. Anal. Appl. 374 (2011), 538-548.

\bibitem{LG99}
Y. Luchko, R. Gorenflo, An operational method for solving fractional 
differential equations with the Caputo derivatives, Acta Math. Vietnam 24 
(1999) 207--233.

\bibitem{MK}
R. Metzler and J. Klafter, The random walk's guide to anomalous diffusion: 
a fractional dynamics approach, Physics Reports 339 (2000), 1-77.

\bibitem{Podlubny}
I. Podlubny, {\em Fractional Differential Equations}. Academic Press, 
San Diego,1999.
  
\bibitem{Ta}
H. Tanabe, \textit{Equations of Evolution}, Pitman, London, 1979.
\end{thebibliography}
\end{document}